\documentclass[12pt,leqno]{article}
\usepackage[doc]{optional}
\usepackage{color}
\usepackage{float}
\usepackage{soul}
\usepackage{graphicx}
\definecolor{labelkey}{rgb}{0,0.08,0.45}
\definecolor{refkey}{rgb}{0,0.6,0.0}
\definecolor{Brown}{rgb}{0.45,0.0,0.05}
\definecolor{lime}{rgb}{0.00,0.8,0.0}
\definecolor{lblue}{rgb}{0.5,0.5,0.99}

\usepackage{mathpazo}




\usepackage{amsmath}

\usepackage{stmaryrd}
\usepackage{amssymb}
\usepackage{theorem}

\usepackage{hyperref}

\oddsidemargin -0.1cm
\textwidth  16.5cm
\topmargin  -0.1cm
\headheight 0.0cm
\textheight 21.2cm
\parindent  4mm
\parskip    10pt 
\tolerance  3000

\newcommand{\sepp}{\setlength{\itemsep}{-2pt}}

\newcommand{\nnn}{\ensuremath{{n\in{\mathbb N}}}}

\newcommand{\menge}[2]{\big\{{#1}~\big |~{#2}\big\}}

\newcommand{\fenv}[1]%
{\ensuremath{\,\overrightarrow{\operatorname{env}}_{#1}}}
\newcommand{\benv}[1]%
{\ensuremath{\,\overleftarrow{\operatorname{env}}_{#1}}}

\newcommand{\ve}{\ensuremath{\varepsilon}}

\newcommand{\exi}{\ensuremath{\exists\,}}

\newcommand{\RR}{\ensuremath{\mathbb R}}
\newcommand{\RP}{\ensuremath{\mathbb{R}_+}}
\newcommand{\RPP}{\ensuremath{\mathbb{R}_{++}}}

\newcommand{\NN}{\ensuremath{\mathbb N}}

\newcommand{\pinf}{\ensuremath{+\infty}}


%
{\begin{list}{}{%
\settowidth{\labelwidth}{\textrm{#1~}}%
\setlength{\leftmargin}{\labelwidth+\labelsep}}}
{\end{list}}
\newtheorem{theorem}{Theorem}[section]

\newtheorem{corollary}[theorem]{Corollary}

\theoremstyle{plain}{\theorembodyfont{\rmfamily}
}
\theoremstyle{plain}{\theorembodyfont{\rmfamily}
}
\theoremstyle{plain}{\theorembodyfont{\rmfamily}
}
\theoremstyle{plain}{\theorembodyfont{\rmfamily}
}

\theoremstyle{plain}{\theorembodyfont{\rmfamily}
\newtheorem{remark}[theorem]{Remark}}

\allowdisplaybreaks 

\begin{document}

\title{{On cluster points of
alternating projections}}

\author{
Heinz H.\ Bauschke\thanks{
Mathematics, University
of British Columbia,
Kelowna, B.C.\ V1V~1V7, Canada. E-mail:
\texttt{heinz.bauschke@ubc.ca}.}~~~and~
Dominikus Noll\thanks{
Universit{\'e} Paul Sabatier,
Institut de Math{\'e}matiques,
118 route de Narbonne, 31062 Toulouse, France.
E-mail: \texttt{noll@mip.ups-tlse.fr}.}
}

\date{July 9, 2013}
\maketitle

\vskip 8mm

\begin{abstract} \noindent
Suppose that $A$ and $B$ are closed subsets of a
Euclidean space such that $A\cap B\neq\varnothing$, 
and we aim to find a point in this intersection
with the help of 
the sequences $(a_n)_\nnn$ and $(b_n)_\nnn$ generated by the 
\emph{method of alternating projections}.
It is well known that if $A$ and $B$ are convex, then 
$(a_n)_\nnn$ and $(b_n)_\nnn$ converge to some point in $A\cap
B$. The situation in the nonconvex case is much more delicate.
In 1990, Combettes and Trussell presented a dichotomy result
that guarantees either convergence to a point in the intersection
or a nondegenerate compact continuum as the set of cluster points.

In this note, we construct two sets in the Euclidean plane
illustrating the continuum case. The sets $A$ and $B$ can be
chosen as countably infinite 
unions of closed convex sets. In contrast,
we also show that such behaviour is impossible for finite unions. 
\end{abstract}

{\small
\noindent
{\bfseries 2010 Mathematics Subject Classification:}
{Primary 65K10; Secondary 47H04, 49M20, 49M37, 65K05, 
90C26, 90C30. 
}

\noindent {\bfseries Keywords:}
Cluster point, 
convex set,
continuum, 
method of alternating projections, 
nonconvex set, 
projection. 
}

\section{Motivation}

Let $X$ be a real Euclidean space, and let $A$ and $B$ be
closed subsets of $X$. Our aim is to find a point in 
$A \cap B$
which we assume to be nonempty. One classical algorithm is the
\emph{method of alternating projections}:
Given a starting point $b_{-1}\in X$, generate sequences
\begin{equation}
(\forall\nnn) \quad
a_{n} \in P_A(b_{n-1})
\;\;\text{and}\;\;
b_n \in P_B(a_n)
\end{equation}
where $P_Cx := \menge{c\in C}{\|x-c\| = d_C(x) := \inf_{y\in
C}\|x-y\|}$ denotes the \emph{projection} of $x$ onto $C$. 
When $A$ and $B$ are convex, then the projectors $P_A$ and $P_B$
are single-valued and the sequences $(a_n)_\nnn$ and $(b_n)_\nnn$
converge to some point in $A \cap B$.
This classical result goes back to Bregman \cite{Bregman}, and it has found
a huge number of extensions (see, e.g., \cite{BC2011}, \cite{CZ},
\cite{GK}, \cite{GR}).
In the general case, when $A$ and $B$ are not necessarily convex,
the situation is much more delicate.
In their 1990 paper \cite{CT}, 
Combettes and Trussell gave quite general 
sufficient conditions for the following dichotomy:
either $(a_n)_\nnn$ and $(b_n)_\nnn$ converge to a point in
$A\cap B$ or the set of cluster points is a nondegenerate
continuum. (For recent results in the nonconvex case, see \cite{BLPW1}
and \cite{BLPW2} and the references therein.)

\emph{The goal of this note is to explicitly construct two sets
$A$ and $B$ illustrating the continuum case.}

The sets $A$ and $B$ may be chosen to be 
countably infinite unions of closed convex sets.
In contrast, we also prove that the continuum case cannot occur
when $A$ and $B$ are finite unions of closed convex sets.

The remainder of the paper is organized as follows.
In Section~\ref{s:curve}, we lay the ground work by studying a
certain curve in the Euclidean plane.
In Section~\ref{s:main}, we use this curve to construct a
sequence of points in the plane that is crucial in obtaining the
sets $A$ and $B$. Some remarks and the announced positive result
conclude the paper.

\section{An intriguing curve}

\label{s:curve}

We will mostly work in the 
Euclidean plane $\RR^2$. 
As usual, angles will be measured in radians, but sometimes we
shall use degrees as in writing $\pi/2 = 90^\circ$. 

Let us recall that 
the distance $d$ between $(r\cos(\alpha),r\sin(\alpha))$ and 
$(s\cos(\beta),s\sin(\beta))$, where $r\in\RP$ and $\alpha\in\RR$,
satisfies
\begin{subequations}
\begin{align}
\label{e:130628c}
d^2 &= \|(r\cos(\alpha),r\sin(\alpha))-(s\cos(\beta),s\sin(\beta))\|^2= r^2 + s^2 -
2rs\cos(\alpha-\beta)\\
&\geq  r^2 + s^2 - 2rs 
=(r-s)^2; 
\end{align}
\end{subequations}
hence, 
\begin{equation}
\label{e:130628a}
r-d \leq s \leq r+d.
\end{equation}
Define the function $\rho$ by 
\begin{equation}
\rho\colon \RP \to \RP \colon t \mapsto 1+\exp(-t).
\end{equation}
This function will represent the distance of a point on 
the curve at time $t$ to the origin. 
Clearly, $\rho$ is strictly decreasing with $\rho(0)=2$ and 
$\lim_{t\to\pinf}\rho(t)=1$. 
Also define
\begin{equation}
\ve \colon \RP \to \RPP \colon
t\mapsto \frac{\rho(t)-\rho(t+2\pi)}{2}.
\end{equation}
Then $\ve' = -\ve$ and hence $\ve$ is strictly decreasing
to $\lim_{t\to\pinf}\ve(t)=0$. 
Note that
\begin{equation}
\label{e:130628d}
\RP\to\RPP\colon \alpha\mapsto \frac{\ve(\alpha)}{\rho(\alpha)} =
\frac{1}{2}\frac{1-e^{-2\pi}}{1+e^\alpha}
\text{~~is strictly decreasing.}
\end{equation}
We now define the curve
\begin{equation}
x\colon \RP\to\RR^2\colon \alpha\mapsto
\rho(\alpha)\cdot\big(\cos(\alpha),\sin(\alpha)\big).
\end{equation}
Note that $x$ describes a spiral traversing counter-clockwise;
$x$ is \emph{injective} because $\rho$ is strictly
decreasing. 
Now let $\alpha$ and $\beta$ be in $\RP$,
and assume that $\|x(\alpha)-x(\beta)\|\leq \ve(\alpha)$. 
By \eqref{e:130628a}, 
$\rho(\alpha)-\ve(\alpha)\leq \rho(\beta)\leq \rho(\alpha)+\ve(\alpha)$. 
Using the definitions, 
we solve these inequality for $\beta$ and obtain
\begin{equation}
\label{e:130628b}
\alpha-0.40 \approx \alpha +\ln(2)-\ln(3-e^{-2\pi}) \leq \beta \leq
\alpha+\ln(2)-\ln(1+e^{-2\pi})\approx\alpha+0.69;
\end{equation}
in degrees, this implies
$\alpha-24^\circ \leq \beta \leq \alpha+40^\circ$.
To summarize,
\begin{equation}
\label{e:130704a}
\|x(\alpha)-x(\beta)\|\leq \ve(\alpha)
\quad\Rightarrow\quad
\alpha-24^\circ \leq \beta \leq \alpha+40^\circ.
\end{equation}
We will now discuss the monotonicity of the function
\begin{equation}
f\colon t \mapsto \|x(\alpha+t)-x(\alpha)\|^2. 
\end{equation}
Because of the triangle inequality 
(or since $\sin(t)+\cos(t)=\sqrt{2}\sin(t+\pi/4)$), 
it is clear that 
\begin{equation}
t\in \left]0,\pi/2\right[
\quad\Rightarrow\quad
\sin(t)+\cos(t) > 1.
\end{equation}
One checks that
\begin{equation}
f'(t)\frac{\exp(2(\alpha+t))}{2} = 
g_1(t)+g_2(t)+g_3(t),
\end{equation}
where
\begin{subequations}
\begin{align}
g_1(t) &= \sin(t)\exp(2t+\alpha)(1+\exp(\alpha)),\\
g_2(t) &= \exp(\alpha+t)\big(\sin(t)+\cos(t)-1\big),\\
\quad
g_3(t)&= \exp(t)\big(\sin(t)+\cos(t)-\exp(-t)\big). 
\end{align}
\end{subequations}
Since each $g_i$ is strictly positive on $\left]0,\pi/2\right[$,
it follows from the mean value theorem that 
\begin{equation}
\label{e:130704b-}
\text{
$f$ is strictly increasing on $[0,\pi/2]$. 
}
\end{equation}
Combining with \eqref{e:130704a}, we deduce\footnote{``$\exi!\,$''
stands for ``there exists a \emph{unique}''}
\begin{equation}
\label{e:130704b}
\big(\forall \alpha\in\RP\big)\big(\exi!\,
\beta>\alpha\big)\;\;
\|x(\beta)-x(\alpha)\|=\ve(\alpha).
\end{equation}
Furthermore, denoting the unit sphere by
$S$, we have 
\begin{equation}
\label{e:130704f}
(\forall\alpha\in\RP)\quad
d_{S}(x(\alpha))=\rho(\alpha)-1 = \exp(-\alpha)>\ve(\alpha).
\end{equation}

\section{An intriguing sequence}

\label{s:main}

We now construct a sequence $(x_n)_\nnn$ in the Euclidean plane
with remarkable properties.
Let us initialize 
\begin{equation}
\alpha_0 := 0,\quad x_0 := x(\alpha_0), 
\quad \rho_0 := \rho(\alpha_0), \quad \ve_0 := \ve(\alpha_0).
\end{equation}
In Cartesian coordinates, $x_0=(2,0)$, and $\ve_0\approx
0.5$. 
Now suppose $\nnn$ and $\alpha_n$, $x_n$, $\rho_n$, and $\ve_n$
are given. 
In view of \eqref{e:130704b}, there exists a unique
$\beta>\alpha_n$ such that 
\begin{equation}
\|x(\beta)-x(\alpha_n)\|=\ve_n.
\end{equation}
We then update
\begin{equation}
\text{
$\alpha_{n+1}:=\beta$, $x_{n+1} := x(\alpha_{n+1})$,
$\rho_{n+1}:=\rho(\alpha_{n+1})$, and
$\ve_{n+1}:=\ve(\alpha_{n+1})$. 
}
\end{equation}
(The picture illustrates the beginning of the spiral and
$x_0,\ldots,x_{15}$ along with the radii used to construct the
next iterate.)
\begin{center}
\includegraphics[scale=0.5]{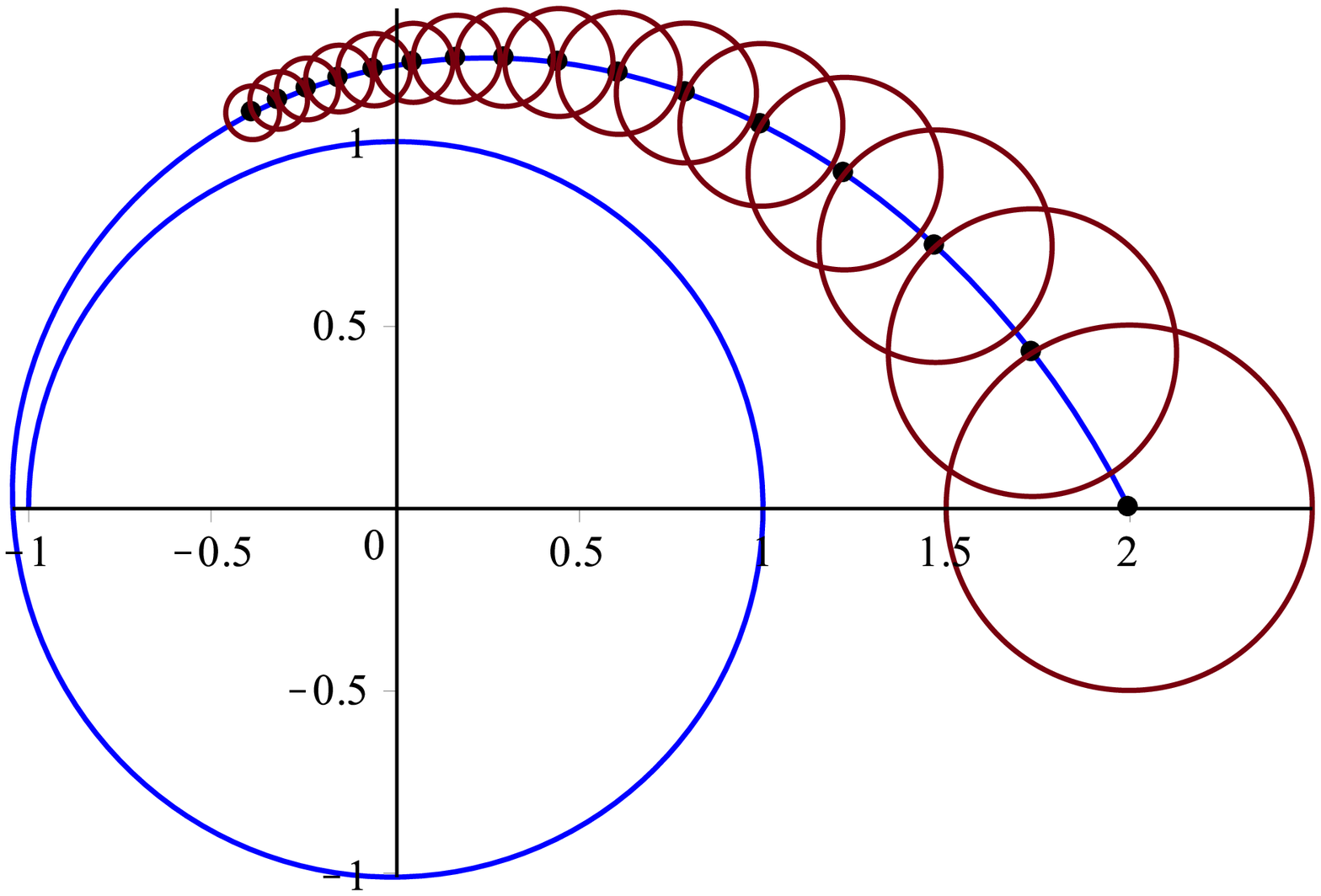}
\end{center}
We also set
\begin{equation}
\delta_{n} := \alpha_{n+1}-\alpha_n.
\end{equation}
By construction,
\begin{equation}
\label{e:130704c}
(\forall\nnn)\quad
\|x_n-x_{n+1}\|=\ve_n
\quad\text{and}\quad
\sum_{k=0}^{n} \delta_k= \alpha_{n+1}-\alpha_0.
\end{equation}
Note that 
\begin{equation}
(\alpha_n)_\nnn \text{~is strictly increasing, and }
(\ve_n)_\nnn \text{~is strictly decreasing}
\end{equation}
because the function $\ve$ is strictly decreasing.
Set 
\begin{equation}
\alpha_\infty := \lim_{\nnn}\alpha_n \in \left]0,\pinf\right].
\end{equation}
Since $\rho$ is strictly decreasing we also note that 
\begin{equation}
(\rho_n)_\nnn \text{~is strictly decreasing, with}
\lim_{\nnn}\rho_n =: \rho_\infty \in \left[1,2\right[.
\end{equation}
Hence the corresponding sequence of quotients satisfies
\begin{equation}
1 > q_n := \frac{\rho_{n+1}}{\rho_n} \to 1.
\end{equation}
Using \eqref{e:130628c} and the half-angle identity for sine, we have
\begin{subequations}
\label{e:130628f}
\begin{align}
(\forall\nnn)\quad 
\ve_n^2 &= \|x_n-x_{n+1}\|^2\\
&= \rho_n^2 + \rho_{n+1}^2 - 2\rho_n\rho_{n+1}\cos(\delta_n)\\
&= (\rho_n-\rho_{n+1})^2 + 2\rho_n\rho_{n+1}
\big(1-\cos(\delta_n)\big)\\
&= (\rho_n-\rho_{n+1})^2 + 4\rho_n\rho_{n+1}
\frac{1-\cos(\delta_n)}{2}\\
&= (\rho_n-\rho_{n+1})^2 + 4\rho_n\rho_{n+1}\sin^2(\delta_n/2).
\end{align}
\end{subequations}
Dividing by $\rho_n^2$ and recalling \eqref{e:130628d},
we obtain 
\begin{equation}
(\forall\nnn)\quad
\left(\frac{1}{2}\frac{1-e^{-2\pi}}{1+e^{\alpha_n}}\right)^2
= \frac{\varepsilon_n^2}{\rho_n^2} = (1-q_n)^2 +
4q_n\sin^2(\delta_n/2).
\end{equation}
Taking limits, we learn that
\begin{equation}
\label{e:130628e}
\left(\frac{1}{2}\frac{1-e^{-2\pi}}{1+e^{\alpha_\infty}}\right)^2
= 4\lim_{n}\sin^2(\delta_n/2).
\end{equation}
Since $\delta_n$, in degrees, belongs to
$\left]0^\circ,40^\circ\right]$
by \eqref{e:130704a}, we deduce that  $(\delta_n)_\nnn$ 
is convergent as well. 
If $\alpha_\infty=\pinf$, then $\delta_n\to 0$ by
\eqref{e:130628e}; however, 
if $\alpha_\infty<\pinf$, then $\delta_n =
\alpha_{n+1}-\alpha_n\to \alpha_\infty-\alpha_\infty = 0$.
Hence, we \emph{always} must have
\begin{equation}
\label{e:130708a}
\delta_n\to 0.
\end{equation}
Again by \eqref{e:130628e}, we have
\begin{equation}
\label{e:130704d-}
\alpha_n\to\alpha_\infty = \pinf, 
\end{equation}
which by \eqref{e:130704c} implies 
\begin{equation}
\label{e:130704d}
\sum_{\nnn} \delta_n = \pinf,
\end{equation}
\begin{equation}
\ve_n\to 0,
\end{equation}
and
\begin{equation}
\label{e:130704g}
\rho_n\to \rho_\infty = 1.
\end{equation}
Note also that in view of \eqref{e:130628f}, 
we have 
\begin{equation}
\ve_n^2 > 4\sin^2(\delta_n/2) \geq
\frac{\delta_n^2}{4}\quad\text{eventually}, 
\end{equation}
where we used \eqref{e:130708a} and the Taylor estimate
\begin{equation}
\sin(t/2) \geq \frac{1}{2}t - \frac{1}{48}t^3 
= \frac{t}{2}\Big(1 - \frac{1}{24}t^2\Big)\geq \frac{t}{4} 
\quad\text{for $t$ sufficiently close to $0$.}
\end{equation}
Combining with \eqref{e:130704d}, we record that
\begin{equation}
(\forall\nnn)\quad
\|x_{n}-x_{n+1}\|>\|x_{n+1}-x_{n+2}\|\to 0,
\quad\text{and}\quad
\sum_{\nnn}\|x_n-x_{n+1}\| = \pinf. 
\end{equation}
Furthermore, \eqref{e:130704d-} and \eqref{e:130704g} imply that 
\begin{equation}
\text{the set of cluster points of $(x_n)_\nnn$ is the unit
sphere $S$.}
\end{equation}
Define
\begin{equation}
(\forall\nnn)\quad
C_n := \{x_0,x_1,\ldots\}\smallsetminus\{x_n\}
\end{equation}
We claim that
\begin{equation}
\label{e:130704e}
(\forall\nnn)\quad
P_{C_n}x_n = \{x_{n+1}\}.
\end{equation}
Let $\nnn$. Since
$D_n := \{x_{n+1},x_{n+2},\ldots\}\subset
x\big(\left]\alpha_n,\pinf\right[\big)$, it follows from
\eqref{e:130704a}, \eqref{e:130704b-}, and 
\eqref{e:130704b} that $P_{D_n}x_n = \{x_{n+1}\}$. 
We show that there is no $k\in\NN$ such that
$k<n$ and $\|x_k-x_n\|<\|x_n-x_{n+1}\|$. 
Suppose the contrary. Then, 
by \eqref{e:130704a}, 
$\alpha_n-24^\circ \leq \alpha_k<\alpha_n$. 
Hence $\alpha_k<\alpha_n\leq\alpha_{k}+24^\circ$. 
By \eqref{e:130704b-}, 
$\|x_k-x_{k+1}\| = \|x(\alpha_k)-x(\alpha_{k+1})\|
\leq\|x(\alpha_k)-x(\alpha_n)\|=\|x_k-x_n\|<\|x_n-x_{n+1}\|<\|x_k-x_{k+1}\|$,
which is absurd. This verifies \eqref{e:130704e}. 
Furthermore, by \eqref{e:130704f}, 
\begin{equation}
(\forall\nnn)\quad d_S(x_n)>\|x_n-x_{n+1}\|.
\end{equation}

Let us summarize our findings.

\begin{theorem}
The sequence $(x_n)_\nnn$ and the set $Y := \menge{x_n}{\nnn}$ 
satisfy the following:
\begin{enumerate}
\item $(\|x_n-x_{n+1}\|)_\nnn$ is strictly decreasing.
\item $x_n-x_{n+1}\to 0$.
\item $\sum_\nnn \|x_n-x_{n+1}\|=\pinf$. 
\item $(\forall\nnn)$ $P_{(S\cup Y)\smallsetminus\{x_n\}}x_n = \{x_{n+1}\}$. 
\item The set of cluster points of $(x_n)_\nnn$ is the compact 
continuum $S$. 
\item $S\cup D$ is closed, where $D$ is an arbitrary subset of
$Y$. 
\end{enumerate}
\end{theorem}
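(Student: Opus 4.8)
The plan is to assemble the six assertions from the facts already established for $(x_n)_\nnn$, since each is essentially a repackaging of earlier work; the one genuinely new point is the global closedness statement~(vi), which I treat last. Items~(i), (ii), and~(iii) are immediate from the construction. By~\eqref{e:130704c} we have $\|x_n-x_{n+1}\|=\ve_n$ for every $\nnn$, and $(\ve_n)_\nnn$ is strictly decreasing to $0$ because $\ve$ is strictly decreasing while $(\alpha_n)_\nnn$ is strictly increasing; this yields~(i) and~(ii). For~(iii) I would invoke the eventual estimate $\ve_n^2>4\sin^2(\delta_n/2)\geq\delta_n^2/4$ together with $\sum_\nnn\delta_n=\pinf$ from~\eqref{e:130704d}, so that $\sum_\nnn\|x_n-x_{n+1}\|$ diverges.

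For~(iv), note first that $x_n\notin S$ since $\rho_n>1$, so $(S\cup Y)\smallsetminus\{x_n\}=S\cup C_n$. I would combine~\eqref{e:130704e}, which gives $P_{C_n}x_n=\{x_{n+1}\}$ at distance $\ve_n$, with~\eqref{e:130704f}, which gives $d_S(x_n)=\exp(-\alpha_n)>\ve_n$; since every point of $S$ is strictly farther from $x_n$ than $x_{n+1}$ is, adjoining $S$ to $C_n$ leaves the projection unchanged, whence $P_{S\cup C_n}x_n=\{x_{n+1}\}$. For~(v), the inclusion of $S$ in the cluster set follows from $\rho_n\to1$ together with $\alpha_n\to\pinf$ and $\delta_n\to0$ (by~\eqref{e:130704d-} and~\eqref{e:130708a}), which force the angles $\alpha_n\bmod 2\pi$ to be dense in $[0,2\pi[$; conversely, any cluster point has norm $\lim_n\rho_n=1$ by~\eqref{e:130704g}, hence lies on $S$. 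As $S$ is compact, connected, and has more than one point, it is a nondegenerate continuum.

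The main obstacle, and the step I would handle most carefully, is~(vi), precisely because $D$ ranges over \emph{all} subsets of $Y$ while $Y$ itself is not closed (its closure is $Y\cup S$). The key observation is that every accumulation point of $(x_n)_\nnn$ lies on $S$; since the $x_n$ are pairwise distinct (by injectivity of $x$ and strict monotonicity of $(\alpha_n)_\nnn$), the accumulation points of the \emph{set} $Y$, and hence of any subset $D\subseteq Y$, are contained in $S$. In particular each $x_n$ is isolated in $Y$, as $d(x_n,C_n)=\ve_n>0$. I would then take a convergent sequence in $S\cup D$: if a subsequence lies in the closed set $S$, its limit is in $S$; otherwise a subsequence lies in $D$, and the limit is either one of finitely many repeated values of $D$, hence in $D$, or an accumulation point of $D$, hence in $S$. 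In every case the limit belongs to $S\cup D$, so $\overline{S\cup D}=\overline{S}\cup\overline{D}\subseteq S\cup D\cup S=S\cup D$, and $S\cup D$ is closed.
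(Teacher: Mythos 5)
Your proposal is correct and follows essentially the same route as the paper, which states this theorem as a summary of the facts \eqref{e:130704c}--\eqref{e:130704e} and \eqref{e:130704f} established just before it; your citations match the ones the paper relies on for each item. The only part the paper leaves entirely implicit is (vi), and your argument there --- the $x_n$ are isolated in $Y$ and every accumulation point of $Y$, hence of any $D\subseteq Y$, lies in $S$ --- is the intended one and is correct.
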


We now obtain the announced example concerning an instance of the
method of alternating projections whose set of cluster points is
a nondegenerate compact continuum.

\begin{corollary}
\label{c:main}
Set $A := \menge{x_{2n}}{\nnn} \cup S$, $B :=
\menge{x_{2n+1}}{\nnn}\cup S$, and $b_{-1}:=x_0$. 
Then $A$ and $B$ are nonempty compact subsets of $\RR^2$. 
The corresponding sequences of alternating projections satisfy
\begin{equation}
(\forall\nnn)\quad
a_{n}=P_Ab_{n-1}=x_{2n}
\;\;\text{and}\;\;
b_n = P_Ba_n = x_{2n+1}.
\end{equation}
Moreover, $a_n-b_{n-1}\to 0$, $b_n-a_n\to 0$,
and $S$ is the set of cluster points of $(a_n)_\nnn$ and of 
$(b_n)_\nnn$. 
\end{corollary}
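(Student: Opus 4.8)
The plan is to derive every assertion from the six properties (i)--(vi) established in the Theorem, so that no new analysis of the curve is needed. I would begin with the topological claims, which are immediate: both $A$ and $B$ have the form $S\cup D$ with $D\subseteq Y$, hence are closed by item~(vi); they contain the nonempty set $S$; and since every point satisfies $\|x_n\|=\rho_n\in[1,2]$ while $S$ is the unit sphere, both sets are bounded. Thus $A$ and $B$ are nonempty and compact.

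The heart of the matter is the pair of projection identities, which I would obtain by induction on $n$. The key elementary observation is a monotonicity of projectors: if $C'\subseteq C$, $p\in C'$, and $P_C x=\{p\}$, then also $P_{C'}x=\{p\}$ (for $d_{C'}(x)\ge d_C(x)=\|x-p\|\ge d_{C'}(x)$ forces equality, and uniqueness is inherited from the larger set). Two bookkeeping facts feed this in: the points $x_n$ are pairwise distinct by injectivity of $x$, and $Y\cap S=\varnothing$ because $\|x_n\|=\rho_n>1$. Consequently $x_{2n}\notin B$, so $B\subseteq(S\cup Y)\smallsetminus\{x_{2n}\}$ while $x_{2n+1}\in B$; combining the monotonicity observation with item~(iv) yields $P_B x_{2n}=\{x_{2n+1}\}$. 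Symmetrically $x_{2n+1}\notin A$, so $A\subseteq(S\cup Y)\smallsetminus\{x_{2n+1}\}$ with $x_{2n+2}\in A$, whence $P_A x_{2n+1}=\{x_{2n+2}\}$. Starting from the base case $a_0=P_A b_{-1}=P_A x_0=\{x_0\}$ (valid since $x_0\in A$), the induction alternates these two identities to give $a_n=x_{2n}$ and $b_n=x_{2n+1}$ for all $\nnn$.

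The convergence of successive differences is then read off from item~(ii): $a_n-b_{n-1}=x_{2n}-x_{2n-1}\to 0$ and $b_n-a_n=x_{2n+1}-x_{2n}\to 0$. For the cluster points I would avoid a direct density argument on the sphere and instead reason as follows. Let $L_a$ and $L_b$ denote the cluster-point sets of $(a_n)_\nnn$ and $(b_n)_\nnn$. Since each index of $(x_n)_\nnn$ is even or odd, a pigeonhole argument shows that the cluster-point set of $(x_n)_\nnn$ equals $L_a\cup L_b$, which is $S$ by item~(v). On the other hand $\|a_n-b_n\|=\|x_{2n}-x_{2n+1}\|=\ve_{2n}\to 0$, so any convergent subsequence of $(a_n)_\nnn$ drags the corresponding subsequence of $(b_n)_\nnn$ to the same limit and conversely; hence $L_a=L_b$. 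Therefore $L_a=L_b=S$, completing the statement.

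The main obstacle is the projection step of the second paragraph: everything hinges on reducing the a priori unfamiliar projectors $P_A$ and $P_B$ to the identity $P_{(S\cup Y)\smallsetminus\{x_n\}}x_n=\{x_{n+1}\}$ of item~(iv). The two ingredients that make this reduction go through are the monotonicity of projectors under set inclusion and the parity bookkeeping (together with $Y\cap S=\varnothing$) guaranteeing that the point being projected never lies in the target set. Once these are secured, the compactness, the differences, and the cluster-point identity follow mechanically from items~(ii), (v), and~(vi).
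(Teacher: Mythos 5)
Your proposal is correct and follows the route the paper intends: the corollary is stated without an explicit proof precisely because it is meant to be read off from items (ii), (iv), (v), and (vi) of the preceding theorem, which is exactly what you do. Your explicit monotonicity-of-projectors lemma and the parity bookkeeping are the right (and only) details needed to make the reduction to item (iv) rigorous.
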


\begin{remark} Some comments on Corollary~\ref{c:main} are in
order. 
\begin{enumerate}
\item We note that Corollary~\ref{c:main}
is the first example constructed where the set of limit points of
alternating projections is a nondegenerate
compact continuum. This complements the analysis of Combettes and
Trussell \cite{CT} who conceived this case. 
\item 
If the starting point $b_{-1}$ is an arbitrary point,
then either $a_0\in S$ or $a_0\in A \smallsetminus S$.
In the first case, we have $(\forall\nnn)$ $a_n=b_n=a_0$; in the
second case, the sequences $(a_n)_\nnn$ and $(b_n)_\nnn$ are
tails of $(x_{2n})_\nnn$ and $(x_{2n+1})_\nnn$
respectively. A more involved analysis shows that if 
$b_{-1}$ is outside the closed unit disk, 
then $P_Ab_{-1}\in A\smallsetminus S$
and we are in the second case. Hence one obtains a nondegenerate
compact continuum of cluster points exactly when $b_{-1}$ lies outside
the closed unit disk. 
\item 
The conclusion of Corollary~\ref{c:main} hold also true if we
replace $S$ be the closed unit disk. In this case, both $A$ and
$B$ are \emph{countably infinite} unions of convex sets. 
In the following result, we show that a degenerate continuum
cannot occur as the set of cluster points when $A$ and $B$ are
\emph{finite} unions of nonempty closed convex sets. 
\end{enumerate}
\end{remark}

\begin{theorem}[finite unions of convex sets]
Suppose that $I$ and $J$ are nonempty finite index sets, 
let $(A_i)_{i\in I}$ and $(B_j)_{j\in J}$ be families of nonempty
closed convex subsets of a Euclidean space $X$, and set $A :=
\bigcup_{i\in I} A_i$ and $B := \bigcup_{j\in J} B_j$.
Consider a sequence of alternating projections $(a_n)_\nnn$ and
$(b_n)_\nnn$ generated by $A$ and $B$:
$b_{-1}\in X$, and $(\forall\nnn)$ $a_{n}\in P_Ab_{n-1}$ and
$b_n \in P_Ba_n$. 
Suppose that $(a_n)_\nnn$ and $(b_n)_\nnn$ are bounded, and that 
$b_n-a_n\to 0$ and $a_{n+1}-b_n\to 0$.
Then there exists a point $c\in A\cap B$ such that $a_n\to c$ and
$b_n\to c$. 
\end{theorem}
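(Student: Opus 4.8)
The plan is to fix an arbitrary cluster point $c$ of $(a_n)_\nnn$ and prove that the \emph{entire} pair of sequences converges to it; since $(a_n)_\nnn$ and $(b_n)_\nnn$ are bounded, such a $c$ exists. First I would record that $c\in A\cap B$: along a subsequence $a_{n_k}\to c$ with $a_{n_k}\in A$ and $A$ closed, so $c\in A$, while $b_{n_k}-a_{n_k}\to 0$ forces $b_{n_k}\to c$ with $b_{n_k}\in B$ closed, so $c\in B$. For each $n$ I would choose indices $i(n)\in I$ and $j(n)\in J$ with $a_n\in A_{i(n)}$ and $b_n\in B_{j(n)}$; because $a_n$ is a nearest point of the whole union $A$ to $b_{n-1}$ and happens to lie in the convex piece $A_{i(n)}$, it is also the (unique) projection $a_n=P_{A_{i(n)}}b_{n-1}$, and likewise $b_n=P_{B_{j(n)}}a_n$.

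The decisive use of finiteness is a positive separation radius. Set $\eta_0:=\min\{\,d_{A_i}(c):i\in I,\ c\notin A_i\,\}\wedge\min\{\,d_{B_j}(c):j\in J,\ c\notin B_j\,\}$, with the convention that an empty minimum is $+\infty$. Since each $A_i,B_j$ is closed and the index sets are \emph{finite}, every term is strictly positive, whence $\eta_0>0$. The point of $\eta_0$ is a trapping property: if $\|a_n-c\|<\eta_0$, then $d_{A_{i(n)}}(c)\le\|a_n-c\|<\eta_0$, so $i(n)$ cannot be an index with $c\notin A_{i(n)}$; that is, $c\in A_{i(n)}$. Nonexpansiveness of the projection onto a convex set relative to a point of that set then yields $\|a_n-c\|\le\|b_{n-1}-c\|$; symmetrically, $\|b_n-c\|<\eta_0$ forces $c\in B_{j(n)}$ and hence $\|b_n-c\|\le\|a_n-c\|$.

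Now I would combine trapping with the asymptotic-regularity hypotheses. Using $b_n-a_n\to 0$ and $a_{n+1}-b_n\to 0$, fix $N$ so large that $\|b_n-a_n\|<\eta_0/2$ and $\|a_{n+1}-b_n\|<\eta_0/2$ for all $n\ge N$, and then, as $c$ is a cluster point, pick $n_0\ge N$ with $\|a_{n_0}-c\|<\eta_0/2$. A short induction shows the iterates are trapped in the ball of radius $\eta_0/2$ about $c$ from $n_0$ on: whenever $\|a_n-c\|<\eta_0/2$, the triangle inequality gives $\|b_n-c\|\le\|b_n-a_n\|+\|a_n-c\|<\eta_0$, so the trapping property applies and $\|b_n-c\|\le\|a_n-c\|<\eta_0/2$; similarly $\|a_{n+1}-c\|\le\|b_n-c\|<\eta_0/2$. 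In particular $(\|a_n-c\|)_{n\ge n_0}$ and $(\|b_n-c\|)_{n\ge n_0}$ are nonincreasing. A nonincreasing sequence whose liminf is $0$ — and $\liminf_n\|a_n-c\|=0$ because $c$ is a cluster point — converges to $0$; hence $a_n\to c$, and then $b_n\to c$ by $b_n-a_n\to 0$.

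I expect the only real obstacle to be isolating the trapping property and seeing that it is exactly where finiteness is indispensable: the argument rests on the active convex piece being \emph{forced} to contain $c$ once the iterate is within $\eta_0$ of $c$, which in turn relies on $\eta_0>0$, i.e.\ on only finitely many closed pieces being present. For a countable union the separation radius can collapse to $0$ (pieces accumulating at $c$, as in the spiral construction of Section~\ref{s:main}), the trapping fails, and the monotonicity that drives convergence is lost — consistent with the continuum produced there. The remaining steps are routine: the membership $c\in A\cap B$, the identification $a_n=P_{A_{i(n)}}b_{n-1}$, and the elementary monotonicity bookkeeping.
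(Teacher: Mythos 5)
Your proof is correct and rests on the same key mechanism as the paper's: the strictly positive minimum distance from a cluster point $c$ to the finitely many convex pieces not containing $c$, combined with nonexpansiveness of projections onto the pieces that do contain $c$, yields eventual Fej\'er-type monotonicity of $\|a_n-c\|$ and $\|b_n-c\|$ and hence convergence of the whole sequences to $c$. The only organizational differences are that you start from an arbitrary cluster point and invoke the hypotheses $b_n-a_n\to 0$ and $a_{n+1}-b_n\to 0$ to keep the iterates trapped near $c$, whereas the paper first pins down $c$ via a pigeonhole/relabeling argument, runs the same trapping induction through direct distance estimates of the form $d_{B_-}(a_m)>\delta/2>\|a_m-c\|$, and concludes along the way that $c$ lies in every piece.
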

\begin{proof}
After relabeling and considering the tails of the sequences if 
necessary, 
we assume that each $A_i$ and each $B_j$ is projected upon
infinitely often.
The pigeonhole principle gives 
$(i_+,j_+)\in I\times J$ and subsequences 
$(a_{k_n})_\nnn$ and $(b_{k_n})_\nnn$ lying in $A_{i_+}$ and 
$B_{j_+}$
respectively. After passing to further subsequences if necessary,
we also assume that there is $c\in A_{i_+}\cap B_{j_+}$ such that 
$a_{k_n}\to c$ and $b_{k_n}\to c$. 
Set $I_{-} := \menge{i\in I}{c\notin A_i}$,
$I_+ := I\smallsetminus I_-$, 
$J_{-} := \menge{j\in J}{c\notin B_j}$, 
$J_+ := J\smallsetminus J_-$, 
$\delta := \min\{\min_{i\in I_-} d_{A_i}(c),
\min_{j\in J_-} d_{B_j}(c),1\}$, 
$A_- := \bigcup_{i\in I_-} A_i$, and
$B_- := \bigcup_{j\in J_-} B_j$.
Since $a_{k_n}\to c$, there exists $m\in\NN$ such that
$\|a_m-c\|<\delta/2$. 
Then 
$d_{B_-}(a_m)\geq d_{B_-}(c)-\|a_m-c\|
>\delta-\delta/2 = \delta/2>\|a_m-c\|\geq d_{B\smallsetminus
B_-}(a_m)$.
Hence 
$(\forall j\in J_-)$ $b_m\notin P_{B_j}(a_m)$ and similarly
$(\forall i\in I_-)$ $a_{m+1}\notin P_{A_i}(b_m)$.
Thus, $b_m\in \menge{P_{B_j}(a_m)}{j\in J_+}$ and
$a_{m+1}\in\menge{P_{A_i}(b_m)}{i\in I_+}$.
Therefore, because the projectors are nonexpansive, 
$\delta/2>\|a_m-c\|\geq\|b_m-c\|\geq\|a_{m+1}-c\| \geq \cdots$ and
recalling 
the assumption that all sets are projected upon
yields $I_-=J_-=\varnothing$, i.e., 
$c\in \bigcap_{i\in I} A_i\cap \bigcap_{j\in J}B_j$. 
Since $c$ is a cluster point of $(a_n)_\nnn$ and $(b_n)_\nnn$, it
thus follows that $\|a_n-c\|\to 0$ and $\|b_n-c\|\to 0$.
\end{proof}

\small

\section*{Acknowledgments}
The authors thank Dr.~Shawn Wang for helpful discussions.
HHB was partially supported by the Natural Sciences and
Engineering Research Council of 
Canada and by the Canada Research Chair
Program.
DN was supported by the research grant Technicom from Foundation
EADS.


\end{document}